\documentclass[12pt]{article}
\usepackage{latexsym, amssymb, amsthm, amsmath}
\usepackage{dsfont}

\textheight=25cm
\textwidth = 6.375 true in
\topmargin=-16mm
\marginparsep=0cm
\oddsidemargin=-0.7cm
\evensidemargin=-0.7cm
\headheight=13pt
\headsep=0.8cm
\parskip=0pt
\baselineskip=27pt
\hfuzz=4pt
\widowpenalty=10000

\DeclareMathAlphabet\gothic{U}{euf}{m}{n}

\setlength{\marginparwidth}{1 true in}


\makeatletter
\def\eqnarray{\stepcounter{equation}\let\@currentlabel=\theequation
\global\@eqnswtrue
\tabskip\@centering\let\\=\@eqncr
$$\halign to \displaywidth\bgroup\hfil\global\@eqcnt\z@
  $\displaystyle\tabskip\z@{##}$&\global\@eqcnt\@ne
  \hfil$\displaystyle{{}##{}}$\hfil
  &\global\@eqcnt\tw@ $\displaystyle{##}$\hfil
  \tabskip\@centering&\llap{##}\tabskip\z@\cr}

\def\endeqnarray{\@@eqncr\egroup
      \global\advance\c@equation\m@ne$$\global\@ignoretrue}

\def\@yeqncr{\@ifnextchar [{\@xeqncr}{\@xeqncr[5pt]}}
\makeatother

\allowdisplaybreaks

\bibliographystyle{tom}

\newtheorem{lemma}{Lemma}[section]
\newtheorem{thm}[lemma]{Theorem}
\newtheorem{cor}[lemma]{Corollary}
\newtheorem{prop}[lemma]{Proposition}

\theoremstyle{definition}

\newtheorem{exam}[lemma]{Example}

\newcommand{\gota}{\gothic{a}}

\newcounter{teller}

\newenvironment{tabel}{\begin{list}%
{\rm  (\alph{teller})\hfill}{\usecounter{teller} \leftmargin=1.1cm
\labelwidth=1.1cm \labelsep=0cm \parsep=0cm}
                      }{\end{list}}

\newcounter{tellerr}

\newcounter{tellerrr}

\newcounter{proofstep}

\newcommand{\Ni}{\mathds{N}}

\newcommand{\Ri}{\mathds{R}}
\newcommand{\R}{\mathds{R}}
\newcommand{\Ci}{\mathds{C}}

\newcommand{\Li}{\mathds{L}}

\newcommand{\dom}{\mathop{\rm dom}}

\newcommand{\RRe}{\mathop{\rm Re}}
\newcommand{\IIm}{\mathop{\rm Im}}

\newcommand{\Tr}{{\mathop{\rm Tr \,}}}

\newcommand{\supp}{\mathop{\rm supp}}
\newcommand{\sgn}{\mathop{\rm sgn}}

\newcommand{\esssup}{\mathop{\rm ess\,sup}}

\newcommand{\loc}{{\rm loc}}

\newcommand{\one}{\mathds{1}}

\hyphenation{groups}
\hyphenation{unitary}

\newcommand{\ch}{{\cal H}}

\newcommand{\cw}{{\cal W}}

\newcommand{\altnorm}[1]{{\left\vert\kern-0.25ex\left\vert\kern-0.25ex\left\vert #1 
    \right\vert\kern-0.25ex\right\vert\kern-0.25ex\right\vert}}

\begin{document}

\thispagestyle{empty}

\vspace*{1cm}
\begin{center}
{\large\bf On the numerical range of sectorial forms} \\[5mm]
\large  A.F.M. ter Elst, A. Linke and J. Rehberg

\end{center}

\vspace{5mm}

\begin{list}{}{\leftmargin=1.8cm \rightmargin=1.8cm \listparindent=10mm 
   \parsep=0pt}
\item
\small
{\sc Abstract}.
We provide a sharp and optimal generic bound for the angle of the sectorial 
form associated to a non-symmetric second-order elliptic differential operator
with various boundary conditions.
Consequently this gives an, in general, 
sharper $\ch^\infty$-angle for the $\ch^\infty$-calculus
on $L_p$ for all $p \in (1,\infty)$ if the coefficients are real valued.
\end{list}

\let\thefootnote\relax\footnotetext{
\begin{tabular}{@{}l}
{\em Mathematics Subject Classification}. 47A12, 47B44, 47A60.\\
{\em Keywords}. Numerical range, sectorial form, $\ch^\infty$-angle.
\end{tabular}}

\section{Introduction} \label{Ssector1}

In the $L_2$-theory of second-order divergence form operators it is classical that 
the numerical range of the sesquilinear form 
$\gota \colon W^{1,2}(\Omega) \times W^{1,2}(\Omega) \to \Ci$ given by
\[
\gota(u,v) = \int_\Omega \mu \nabla u \cdot \overline {\nabla v }
\]
is contained in the sector with \mbox{(half-)angle} $\arctan \frac {M}{m}$, 
if the coefficient function $\mu$ admits
the uniform bound $M$ and ellipticity constant~$m$.  
Moreover, it is is well-known that the angle of the numerical range sector 
has implications for resolvent estimates 
and for the holomorphic calculus, both for the $L_2$-realisation of the 
elliptic operator and the $L_p$-realisation, see below.
Hence the question arises whether the above angle is optimal.
In this paper we show that one can improve the angle.
The expression we find is completely explicit 
in $M$ and $m$, uniform in all matrices with uniform bound $M$ and ellipticity 
constant~$m$.
Moreover,  it is optimal, see Example~\ref{xsector204}.
All of this allows in Corollary~\ref{csector203} to give a sharper estimate for the angle 
of the sector containing the numerical range.

Further, we provide resolvent decay for the operator $A_p$
which is associated with the form~$\gota$ on $L_p(\Omega)$, where $p \in (1,\infty)$.
Uniform resolvent estimates for the elliptic operators 
are important for the treatment of  nonautonomous
parabolic equations, see for example 
\cite{Tan2}, \cite{Kat12}, \cite{SobolevskiiR}, \cite{AcquistapaceTerreni}, \cite{Yos}.
For an alternative approach, not using the evolution system, see \cite{DaPratoSinestrari}.
This use of uniform resolvent estimates is standard nowadays, see 
\cite[Chapter~II]{Ama2}, \cite[Section~6.1]{Lun}.

In Section~\ref{Ssector3} we prove that the operator $A_p$ admits a 
bounded $\ch^\infty$-calculus with \mbox{(half-)angle} smaller than $\pi/2$. 
Using the better numerical range on $L_2(\Omega)$, we obtain a better $\ch^\infty$-angle 
on $L_p(\Omega)$ by applying the Crouzeix--Delyon theorem and a theorem of Kalton--Kunstmann--Weis.
This enables sharper estimates for the purely imaginary powers of the operators. 
Applying the Dore--Venni theorem one obtains, as a byproduct, even maximal 
parabolic regularity on $L_p(\Omega)$ for all $p \in (1,\infty)$.

In Section~\ref{Ssector4} we consider as in \cite{HKrR} or \cite{EMR}
an elliptic operator subject to 
mixed boundary conditions and domain inhomogeneities supported on a lower
dimensional hypersurface.
Such results are of use when treating parabolic problems with dynamical boundary conditions, 
compare also \cite{VV}.

\section{\hspace*{-6pt}Numerical range and spectral theoretic consequences} \label{Ssector2}

Let $H$ be a Hilbert space with $H \neq \{ 0 \} $ and let $T$ be 
an operator in $H$ with domain $\dom(T)$. 
The {\bf numerical range} $\Lambda(T)$ of $T$ is defined by
\[
\Lambda(T)
= \{ (Tu, u)_H : u \in \dom(T) \mbox{ and } \|u\|_H = 1 \} 
 .  \]
A classical theorem of Hausdorff says that the numerical range is a convex set.
For all $\theta \in [0,\pi)$ define
\[
\Sigma(\theta)
= \{ r \, e^{i \varphi} : r \in [0,\infty) \mbox{ and } \varphi \in [-\theta,\theta] \} 
 .  \]
Then $\Sigma(\theta)$ is closed and $0 \in \Sigma(\theta)$.

\begin{prop} \label{psector201}
Let $T$ be a bounded operator in a Hilbert space $H$.
Let $E = \frac{1}{2i} (T - T^*)$ be the imaginary part of $T$.
Suppose that $T$ is coercive and let $m > 0$ be such that 
\[
\RRe (T u, u)_H
\geq m \, \|u\|_H^2
 .  \]
for all $u \in H$.
Then 
\begin{equation}
\|E\| \leq \sqrt{\|T\|^2 - m^2}
\label{epsector201;1}
\end{equation}
and 
$\Lambda(T) \subset \Sigma(\arctan \sqrt{\bigl (\frac {\|T\|}{m}\bigr )^2 -1})$.
\end{prop}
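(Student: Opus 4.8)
The plan is to reduce both assertions to a single elementary estimate on the scalars $(Tu,u)_H$ for unit vectors $u$. I would first decompose $T = D + iE$ with $D = \frac{1}{2}(T+T^*)$ and $E = \frac{1}{2i}(T-T^*)$, so that $D$ and $E$ are bounded self-adjoint operators and, for every $u \in H$, one has $\RRe (Tu,u)_H = (Du,u)_H$ and $\IIm (Tu,u)_H = (Eu,u)_H$. I would also record at the outset that the hypotheses force $0 < m \le \|T\|$, since $m\|u\|_H^2 \le \RRe(Tu,u)_H \le |(Tu,u)_H| \le \|T\|\,\|u\|_H^2$; this guarantees that $\sqrt{\|T\|^2-m^2}$ and the angle $\arctan\sqrt{(\|T\|/m)^2-1} \in [0,\pi/2)$ are well defined.

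The crucial inequality is as follows: for $u \in H$ with $\|u\|_H = 1$, set $a = \RRe(Tu,u)_H$ and $b = \IIm(Tu,u)_H$. The Cauchy--Schwarz inequality gives
\[
a^2 + b^2 = |(Tu,u)_H|^2 \le \|Tu\|_H^2 \le \|T\|^2 ,
\]
while coercivity gives $a \ge m$. From $b^2 \le \|T\|^2 - a^2 \le \|T\|^2 - m^2$, valid for all such $u$, together with the variational formula $\|E\| = \sup_{\|u\|_H = 1} |(Eu,u)_H|$ (which holds because $E$ is self-adjoint), I obtain the bound \eqref{epsector201;1} at once.

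For the numerical range inclusion I would use the same two facts once more. Since $a \ge m > 0$ we have $m^2 \le a^2$, hence $\|T\|^2 \le a^2 (\|T\|/m)^2$, and therefore
\[
b^2 \le \|T\|^2 - a^2 \le a^2\Bigl(\bigl(\tfrac{\|T\|}{m}\bigr)^2 - 1\Bigr) .
\]
Writing $\theta = \arctan\sqrt{(\|T\|/m)^2-1}$, this says $|b|/a \le \tan\theta$; since $a > 0$ the nonzero complex number $(Tu,u)_H = a + ib$ can be written $r\,e^{i\varphi}$ with $r > 0$ and $|\varphi| \le \theta$, so $(Tu,u)_H \in \Sigma(\theta)$. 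As $u$ ranges over the unit sphere of $H$ this yields $\Lambda(T) \subset \Sigma(\theta)$.

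I do not expect a genuine obstacle here: the whole argument is Cauchy--Schwarz together with the variational characterisation of the norm of a self-adjoint operator. The only points needing a moment's care are verifying $m \le \|T\|$ so that the stated quantities make sense, and the short manipulation $\|T\|^2 - a^2 \le a^2((\|T\|/m)^2 - 1)$, which is exactly the place where the coercivity bound $a \ge m$ is invoked a second time (it was already used in the first part to pass from $a^2$ down to $m^2$).
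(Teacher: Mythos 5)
Your proof is correct, and for the norm bound \eqref{epsector201;1} it takes a genuinely different and in fact shorter route than the paper. The paper writes $T = S + iE$ with $S$ the real part and expands $\|Tu\|^2 = \|Su\|^2 + \|Eu\|^2 - i(Su,Eu)_H + i(Eu,Su)_H$; the cross terms only cancel when $u$ is an eigenvector of $E$, so the paper first treats the case where $E$ attains its norm at an eigenvalue and then reaches the general case by perturbing $E$ by a small self-adjoint $P$ (via the spectral theorem) and passing to the limit $\varepsilon \downarrow 0$. You bypass all of this by staying at the level of the scalar $(Tu,u)_H = a + ib$: Cauchy--Schwarz gives $a^2 + b^2 = |(Tu,u)_H|^2 \le \|Tu\|_H^2 \le \|T\|^2$, coercivity gives $a \ge m$, and the identity $\|E\| = \sup_{\|u\|_H=1}|(Eu,u)_H|$ for bounded self-adjoint $E$ converts the resulting uniform bound $|b| \le \sqrt{\|T\|^2 - m^2}$ into \eqref{epsector201;1}. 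The one ingredient you must (and correctly do) invoke is that the norm of a bounded self-adjoint operator equals its numerical radius; granted that standard fact, no approximation argument is needed. For the sector inclusion your derivation is also slightly more direct: you read $|b|/a \le \tan\theta$ off the same pointwise inequality $b^2 \le \|T\|^2 - a^2$, whereas the paper routes the estimate through the operator bound on $\|E\|$ combined with $\RRe(Tu,u)_H \ge m\|u\|_H^2$; both yield the same sector. Your preliminary observation that $m \le \|T\|$, so that the square roots are well defined, is a point the paper leaves implicit.
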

\begin{proof}
Define $S = \frac{1}{2} (T + T^*)$, the real part of $T$.
Then $S$ is self-adjoint and $(S u, u)_H = \RRe (T u, u)_H \geq m \, \|u\|_H^2$
for all $u \in H$.
Hence $\|S u\| \geq m \, \|u\|_H$ for all $u \in H$.

Note that the operator $E$ is self-adjoint.
First suppose that the operator $E$ has an eigenvalue such that the modulus 
is equal to $\|E\|$, that is, there exist $u \in H$ and $\lambda \in \Ri$
such that $E u = \lambda u$, $\|u\|_H = 1$ and $|\lambda| = \|E\|$.
Then
\begin{eqnarray*}
\|T\|^2
& \geq & \|T u\|^2
= ((S + i E)u, (S + i E)u)_H  \\
& = & \|S u\|^2 + \|E u\|^2 - i (Su, Eu)_H + i (Eu, Su)_H  \\
& = & \|S u\|^2 + \|E\|^2 - i \, \lambda \, (Su, u)_H + i \, \lambda \, (u, Su)_H  \\
& = & \|S u\|^2 + \|E\|^2 
\geq m^2 + \|E\|^2
,
\end{eqnarray*}
which implies (\ref{epsector201;1}).

Now we consider the general case.
Let $\varepsilon > 0$.
It follows from the spectral theorem that there exists a 
self-adjoint bounded operator $P$ such that 
$\|P\| \leq \varepsilon$ and the operator $E + P$
has an eigenvalue such that the modulus equals $\|E + P\|$.
Apply the above to the operator $S + i (E + P)$ and note that 
this operator has the same coercivity constant $m$.
One obtains the estimate
\[
\|E\| - \varepsilon
\leq \|E + P\| 
\leq \sqrt{\|T + i P\|^2 - m^2}
\leq \sqrt{(\|T\| + \varepsilon)^2 - m^2}
 .  \]
Finally take the limit $\varepsilon \downarrow 0$.

The inclusion is easy since
\[
|\IIm (Tu, u)_H|
= |(Eu, u)_H|
\leq \|E\| \, \|u\|^2
\leq \frac{ \sqrt{\|T\|^2 - m^2} }{ m } \, \RRe (Tu,u)
\]
for all $u \in H$.
\end{proof}

The estimate (\ref{epsector201;1}) is sharp.
Equality occurs  for example if $T = I + i E$, where $E$ is a bounded self-adjoint operator.

We apply Proposition~\ref{psector201} to sectorial 
forms associated to second-order differential operators.

\begin{thm} \label{tsector202}
Let $\Omega \subset \Ri^d$ be open and $\mu \colon \Omega \to \Ci^{d \times d}$
be a bounded measurable function.
Suppose that there exists an $m > 0$ such that 
$\RRe \mu(x) \xi \cdot \overline \xi \geq m \, |\xi|^2$
for all $\xi \in \Ci^d$.

Define the sesquilinear form 
$\gota \colon W^{1,2}(\Omega) \times W^{1,2}(\Omega) \to \Ci$ by 
\[
\gota[u,v]= \int_\Omega \mu \nabla u \cdot \overline{\nabla v}
 .  \] 
Then 
\begin{equation} \label{etsector202;5}
\gota[u] \in \Sigma(\kappa) 
\end{equation}
for all $u \in \dom(\gota)$,
where $\kappa = \arctan \sqrt{\bigl (\frac {M}{m}\bigr)^2- 1}$ and
$M= \esssup_{x \in \Omega} \|\mu(x)\|$.
Here $\|\cdot\|$ is the usual operator norm on~$\mathcal L(\Ci^d)$. 
\end{thm}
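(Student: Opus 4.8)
The plan is to reduce the statement to Proposition~\ref{psector201} by localising in $x$. The pointwise coercivity hypothesis gives, for each fixed $x$, that the matrix $\mu(x) \in \mathcal L(\Ci^d)$ is a bounded operator on the finite-dimensional Hilbert space $\Ci^d$ satisfying $\RRe(\mu(x)\xi,\xi) \geq m\,|\xi|^2$, hence $\mu(x)$ is coercive with constant $m$. Applying Proposition~\ref{psector201} to $T = \mu(x)$ on $H = \Ci^d$ yields
\[
\mu(x)\xi \cdot \overline\xi \in \Sigma\Bigl(\arctan\sqrt{(\tfrac{\|\mu(x)\|}{m})^2-1}\Bigr)
\]
for all $\xi \in \Ci^d$. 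Since $\|\mu(x)\| \leq M$ for a.e.\ $x$ and $\theta \mapsto \Sigma(\theta)$ is increasing, we obtain $\mu(x)\xi \cdot \overline\xi \in \Sigma(\kappa)$ for a.e.\ $x$ and all $\xi$.

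Next I would lift this to the integral. Fix $u \in \dom(\gota) = W^{1,2}(\Omega)$. For a.e.\ $x \in \Omega$ the vector $\nabla u(x) \in \Ci^d$, so by the previous step $\mu(x)\nabla u(x)\cdot\overline{\nabla u(x)} \in \Sigma(\kappa)$. The key observation is that $\Sigma(\kappa)$ is a closed convex cone, so it is closed under integration against the positive measure $dx$ on $\Omega$: if $f \colon \Omega \to \Ci$ is integrable with $f(x) \in \Sigma(\kappa)$ a.e., then $\int_\Omega f \in \Sigma(\kappa)$. (This is immediate from the supporting-halfplane description $\Sigma(\kappa) = \bigcap_{|\varphi|\le \pi/2-\kappa}\{z : \RRe(e^{i\varphi}z)\ge 0\}$: each halfplane condition is preserved under integration.) Applying this to $f(x) = \mu(x)\nabla u(x)\cdot\overline{\nabla u(x)}$, which is integrable since $\mu$ is bounded and $\nabla u \in L_2$, gives $\gota[u] = \int_\Omega \mu\nabla u\cdot\overline{\nabla u} \in \Sigma(\kappa)$, which is \eqref{etsector202;5}.

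I do not expect a serious obstacle here; the proof is essentially a pointwise application of Proposition~\ref{psector201} followed by the convex-cone closure of $\Sigma(\kappa)$. The one point that needs a word of care is the degenerate case $m \geq M$: then one shows $\mu(x)$ is a.e.\ a positive multiple of a contraction-type perturbation of the identity, $\kappa = \arctan 0 = 0$, and $\Sigma(0) = [0,\infty)$, so $\gota[u] \geq 0$; this is consistent since $\RRe \mu(x)\xi\cdot\overline\xi \ge m|\xi|^2$ and $\|\mu(x)\| \le m$ force $\mu(x)\xi\cdot\overline\xi$ real and nonnegative. Otherwise $M > m$, the square root is well-defined, and everything goes through verbatim. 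One should also note that Proposition~\ref{psector201} is stated for operators on a general Hilbert space, and its application to $H = \Ci^d$ requires nothing beyond finite-dimensionality, which only makes the spectral-theorem step in that proposition's proof trivial.
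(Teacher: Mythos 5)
Your proof is correct and follows essentially the same route as the paper: a pointwise application of Proposition~\ref{psector201} to the matrix $\mu(x)$ on $\Ci^d$, followed by integration over $\Omega$ using that $\Sigma(\kappa)$ is a closed convex cone. The paper's proof is just a terser version of yours; your added remarks on integrability and the degenerate case $m \geq M$ are fine but not needed.
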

\begin{proof}
Let $u \in W^{1,2}(\Omega)$.
If $x \in \Omega$, then one can apply Proposition~\ref{psector201} to the 
operator $\mu(x)$ on~$\Ci^d$ to deduce that 
$(\mu \nabla u \cdot \overline{\nabla u})(x) 
\in \Sigma(\kappa)$.
Now integrate over $x \in \Omega$.
\end{proof}

The angle of the sector for a sectorial form gives resolvent bounds
for the associated operator, see \cite{Kat1} Theorem~V.3.2.
The following situation for $j$-elliptic forms gives a particularly
nice description. 
For $j$-elliptic forms and the associated m-sectorial operators we 
refer to \cite{AE2} Section~2.

\begin{thm} \label{tsector210}
Let $V$, $H$ be Hilbert spaces, $\gota \colon V \times V \to \Ci$ 
a sesquilinear form, $j \colon V \to H$ a continuous linear operator and 
$\kappa \in [0,\frac{\pi}{2})$.
Suppose that $\gota[u] \in \Sigma(\kappa)$ for all $u \in \dom(\gota)$
and that $j$ has dense range.
Further suppose that $\gota$ is $j$-elliptic.
Let $A$ be the operator associated with $(\gota,j)$.
Then $\sigma(A) \subset \overline{\Lambda(A)} \subset \Sigma(\kappa)$ and 
\[
\|(A + \lambda \, I)^{-1}\|_{H \to H}
\leq \frac{1}{d(-\lambda , \Sigma(\kappa))}
\]
for all $\lambda \in \Ci$ with $- \lambda \not\in \Sigma(\kappa)$.
\end{thm}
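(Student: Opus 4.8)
The plan is to reduce the resolvent estimate to the scalar geometry of the sector $\Sigma(\kappa)$ together with the known structure theory of $j$-elliptic forms. First I would recall from \cite{AE2}, Section~2, that since $\gota$ is $j$-elliptic and $j$ has dense range, the associated operator $A$ is well defined, densely defined, and m-sectorial, with the defining relation: $x \in \dom(A)$ and $Ax = f$ if and only if there exists $u \in V$ with $j(u) = x$ and $\gota[u,v] = (f, j(v))_H$ for all $v \in V$. In particular, taking $v = u$, every element of the numerical range $\Lambda(A)$ has the form $(Ax,x)_H = \gota[u,u] = \gota[u]$ for a suitable $u \in V$, hence $\Lambda(A) \subset \Sigma(\kappa)$; since $\Sigma(\kappa)$ is closed this gives $\overline{\Lambda(A)} \subset \Sigma(\kappa)$. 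The inclusion $\sigma(A) \subset \overline{\Lambda(A)}$ is a standard fact for operators whose numerical range is not the whole plane, once one knows that the unbounded component of the complement of $\overline{\Lambda(A)}$ contains no spectrum — here the resolvent set is nonempty (e.g. large negative reals lie in it by m-sectoriality), so the standard argument applies to the single component $\Ci \setminus \Sigma(\kappa)$.

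Next I would prove the quantitative bound. Fix $\lambda \in \Ci$ with $-\lambda \notin \Sigma(\kappa)$; then $-\lambda \notin \overline{\Lambda(A)}$, so $A + \lambda I$ is injective with dense range, and for $x \in \dom(A)$ with $\|x\|_H = 1$ one estimates
\[
\|(A + \lambda I) x\|_H
\geq |((A + \lambda I) x, x)_H|
= |(Ax,x)_H - (-\lambda)|
= d\bigl((Ax,x)_H, \{-\lambda\}\bigr)
\geq d(-\lambda, \Sigma(\kappa)),
\]
using $(Ax,x)_H \in \Sigma(\kappa)$ in the last step. This shows $\|(A + \lambda I) x\|_H \geq d(-\lambda, \Sigma(\kappa)) \, \|x\|_H$ for all $x \in \dom(A)$, so $(A + \lambda I)^{-1}$ is bounded on its domain with norm at most $d(-\lambda, \Sigma(\kappa))^{-1}$. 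It then remains to upgrade this to the assertion that $-\lambda \in \rho(A)$, i.e. that $A + \lambda I$ is actually surjective; this follows because $A$ is m-sectorial, so its spectrum is contained in some sector $\Sigma(\omega)$ with $\omega < \pi/2$, and $\Ci \setminus \Sigma(\kappa)$ is connected and meets the resolvent set, whence by the standard continuity argument the a priori bound forces the whole component into $\rho(A)$ and yields $\|(A+\lambda I)^{-1}\|_{H \to H} \leq d(-\lambda, \Sigma(\kappa))^{-1}$.

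The main obstacle, such as it is, is bookkeeping rather than mathematics: one must be careful to invoke the $j$-elliptic framework of \cite{AE2} correctly so that $A$ is genuinely m-sectorial (this is where $j$-ellipticity, not mere sectoriality of $\gota$, is used) and so that $\Lambda(A)$ is computed via $\gota[u]$ with $j(u)=x$, and one must justify that the resolvent set is nonempty and that $\Ci\setminus\Sigma(\kappa)$ is the relevant connected component to which the abstract "numerical range controls the resolvent" principle (cf. \cite{Kat1}, Theorem~V.3.2) applies. Once those structural points are in place, the estimate itself is the one-line computation above. I expect no serious difficulty, and indeed the theorem is essentially a packaging of \cite{Kat1}, Theorem~V.3.2, in the $j$-elliptic language of \cite{AE2}.
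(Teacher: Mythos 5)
Your proof is correct and follows essentially the same route as the paper: identify $\Lambda(A)$ with the form values $\gota[u]$ for $u\in V$ with $j(u)=x$ to get $\overline{\Lambda(A)}\subset\Sigma(\kappa)$, then invoke the standard numerical-range/resolvent principle for m-sectorial operators (Kato, Theorem~V.3.2). The paper simply cites Kato for both the spectral inclusion and the quantitative bound, whereas you unfold that citation into the explicit estimate $\|(A+\lambda I)x\|_H \geq |((A+\lambda I)x,x)_H| \geq d(-\lambda,\Sigma(\kappa))\,\|x\|_H^2$ together with the connectedness/surjectivity argument; the content is the same.
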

\begin{proof}
Since $\gota$ is $j$-elliptic, the operator $A$ is m-sectorial.
Hence $\sigma(A) \subset \overline{\Lambda(A)}$ by \cite{Kat1} Theorem~V.3.2.
Let $f \in \dom(A)$ with $\|f\|_H = 1$.
Then there exists a $u \in V$ such that $j(u) = f$ and 
$\gota[u,v] = (A f, j(v))_H$ for all $v \in V$.
Therefore $(Af, f)_H = \gota[u,u] \in \Sigma(\kappa)$.
So $\Lambda(A) \subset \Sigma(\kappa)$.
It follows from \cite{Kat1} Theorem~V.3.2 that 
$\|(A + \lambda \, I)^{-1}\|_{H \to H}
\leq \frac{1}{d(-\lambda , \overline{ \Lambda(A) } )}$
for all $\lambda \in \Ci$ with $-\lambda \not\in \overline{ \Lambda(A) } $.
This implies the inequality in the theorem.
\end{proof}

We return to second-order differential operators.

\begin{cor} \label{csector203}
Adopt the assumptions and notation as in Theorem~\ref{tsector202}. 
Let $V \subset W^{1,2}(\Omega)$ be a closed subspace such that $C_c^\infty(\Omega) \subset V$ 
and let $\gota_V = \gota|_{V \times V}$. 
Let $A_V$ be the m-sectorial operator associated with the form~$\gota_V$.
Then $\sigma(A_V) \subset \overline{\Lambda(A_V)} \subset \Sigma(\kappa)$.
Moreover, let $\theta \in (\kappa,\frac{\pi}{2})$.
Then 
\[
\|(A_V + \lambda \, I)^{-1}\|_{2 \to 2}
\leq \frac{M}{ m \, \sin \theta - \sqrt{M^2 - m^2} \, \cos \theta} \, \frac{1}{|\lambda|}
\]
for all $\lambda \in \Sigma(\pi - \theta)$.
\end{cor}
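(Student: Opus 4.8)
The plan is to combine the sectoriality statement for $\gota_V$ with the abstract resolvent estimate of Theorem~\ref{tsector210}, and then to make the distance $d(-\lambda,\Sigma(\kappa))$ explicit for $\lambda$ in the sector $\Sigma(\pi-\theta)$. First I would observe that $\gota_V$ inherits the numerical-range inclusion: since $\dom(\gota_V)\subset\dom(\gota)$ and $\gota_V$ is just the restriction, Theorem~\ref{tsector202} gives $\gota_V[u]\in\Sigma(\kappa)$ for all $u\in\dom(\gota_V)$. Because $C_c^\infty(\Omega)\subset V$ and $\RRe\mu(x)\xi\cdot\overline\xi\ge m|\xi|^2$, the form $\gota_V$ is elliptic (here $j$ is the inclusion $V\hookrightarrow L_2(\Omega)$, which has dense range), so Theorem~\ref{tsector210} applies with $H=L_2(\Omega)$ and yields $\sigma(A_V)\subset\overline{\Lambda(A_V)}\subset\Sigma(\kappa)$ together with
\[
\|(A_V+\lambda I)^{-1}\|_{2\to2}\le\frac{1}{d(-\lambda,\Sigma(\kappa))}
\]
for all $\lambda$ with $-\lambda\notin\Sigma(\kappa)$.

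Next I would carry out the elementary planar geometry. Fix $\theta\in(\kappa,\pi/2)$ and take $\lambda\in\Sigma(\pi-\theta)$, so that $-\lambda$ lies in the complementary cone $\{re^{i\psi}:|\psi-\pi|\le\pi-\theta\}$, i.e. $\arg(-\lambda)$ is at angular distance at least $\theta$ from the positive real axis. The distance from a point $z=|\lambda|e^{i\alpha}$ (with $\alpha\in[\theta,2\pi-\theta]$, say) to the cone $\Sigma(\kappa)$ is $|z|\sin(\beta)$ where $\beta\in[\theta-\kappa,\pi]$ is the angular gap between $\arg z$ and the nearest boundary ray of $\Sigma(\kappa)$; since the sine is minimised at the endpoint closest to $\theta$, one gets $d(-\lambda,\Sigma(\kappa))\ge|\lambda|\sin(\theta-\kappa)$. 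Hence
\[
\|(A_V+\lambda I)^{-1}\|_{2\to2}\le\frac{1}{\sin(\theta-\kappa)}\,\frac{1}{|\lambda|}.
\]

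Finally I would rewrite $\frac{1}{\sin(\theta-\kappa)}$ in terms of $M$ and $m$. By definition $\kappa=\arctan\sqrt{(M/m)^2-1}$, so $\cos\kappa=m/M$ and $\sin\kappa=\sqrt{M^2-m^2}/M$. Expanding $\sin(\theta-\kappa)=\sin\theta\cos\kappa-\cos\theta\sin\kappa=\frac{1}{M}\bigl(m\sin\theta-\sqrt{M^2-m^2}\cos\theta\bigr)$ gives exactly
\[
\frac{1}{\sin(\theta-\kappa)}=\frac{M}{m\sin\theta-\sqrt{M^2-m^2}\cos\theta},
\]
which is positive precisely because $\theta>\kappa$ forces $\tan\theta>\tan\kappa=\sqrt{M^2-m^2}/m$. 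Substituting this into the previous bound yields the claimed estimate for all $\lambda\in\Sigma(\pi-\theta)$.

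The only mildly delicate point is the distance computation: one must check that for $-\lambda$ in the cone $\Sigma(\pi-\theta)$ reflected about the origin, the nearest point of $\Sigma(\kappa)$ is realised on a boundary ray (not at the apex $0$, unless $\lambda=0$) and that the relevant angular gap is bounded below by $\theta-\kappa$; this is a routine case distinction on $\arg\lambda$, using convexity of $\Sigma(\kappa)$ and the fact that $\theta-\kappa\in(0,\pi/2)$ so that the perpendicular foot lands on the ray rather than beyond its endpoint. Everything else is bookkeeping with the trigonometric identities above.
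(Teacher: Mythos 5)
Your proposal is correct and follows exactly the route of the paper's own (much terser) proof: restrict the form, apply Theorem~\ref{tsector210} with $j$ the inclusion of $V$ into $L_2(\Omega)$, and then convert $d(-\lambda,\Sigma(\kappa))\geq|\lambda|\sin(\theta-\kappa)$ into the explicit constant via $\cos\kappa=m/M$, $\sin\kappa=\sqrt{M^2-m^2}/M$. The only point the paper leaves as ``elementary trigonometry'' is the distance estimate you carry out, and your treatment of it (including the apex-versus-ray case when the angular gap exceeds $\pi/2$, where $d(-\lambda,\Sigma(\kappa))=|\lambda|\geq|\lambda|\sin(\theta-\kappa)$ trivially) is sound.
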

\begin{proof}
Evidently, $\gota_V[u] \in \Sigma(\kappa)$ for all $u \in V$
by Theorem~\ref{tsector202}. 
Now apply Theorem~\ref{tsector210} with $j \colon V \to L_2(\Omega)$ the 
identity map.
Then $\sigma(A_V) \subset \overline{\Lambda(A_V)} \subset \Sigma(\kappa)$ and 
\[
\|(A_V + \lambda \, I)^{-1}\|_{2 \to 2}
\leq \frac{1}{d(-\lambda , \Sigma(\kappa))}
\]
for all $\lambda \in \Ci$ with $- \lambda \not\in \Sigma(\kappa)$.
Then the assertion follows by elementary trigonometry.
\end{proof}

\begin{exam} \label{xsector204}
We present an example of an elliptic differential operator with real coefficients 
such that the angle $\kappa$ in (\ref{etsector202;5}) is optimal.

Let $\Omega \subset \Ri^2$ be a non-empty open bounded set.
Choose $\mu(x) = \left( \begin{array}{cc} 1 & 1 \\ -1 & 1 \end{array} \right)$
for all $x \in \Omega$ and $V = W^{1,2}(\Omega)$.
Define $u \in W^{1,2}(\Omega)$ by 
$u(x,y) = -x + y + i(x+y)$.
A straightforward calculation gives $\gota[u] = (4 - 4i) \, |\Omega|$.
Also $m = 1$ and $M = \sqrt{2}$.
So $\kappa = \frac{\pi}{4}$ and it cannot be improved.
\end{exam}

\section{Bounded $\ch^\infty$-calculus and maximal parabolic regularity} \label{Ssector3}

Let $\Omega \subset \Ri^d$ be open connected 
and $\mu \colon \Omega \to \Ri^{d \times d}$
be a bounded measurable function.
Suppose that there exists an $m > 0$ such that 
$\RRe \mu(x) \xi \cdot \overline \xi \geq m \, |\xi|^2$
for all $\xi \in \Ci^d$.
Let $M = \esssup_{x \in \Omega} \|\mu(x)\|$ and set
\[
\kappa = \arctan \sqrt{\Big(\frac {M}{m} \Big)^2 - 1}
 .  \]
In this section we consider realisations of the elliptic operator with 
(real) coefficients $\mu$
with mixed boundary conditions on the space $L_p(\Omega)$.

Define the sesquilinear form $\gota \colon W^{1,2}(\Omega) \times W^{1,2}(\Omega) \to \Ci$ by 
\[
\gota[u,v]= \int_\Omega \mu \nabla u \cdot \overline{\nabla v}
 .  \]
Let $D$ be a closed subset of $\partial \Omega$.
We denote by $W^{1,2}_D(\Omega)$ the closure in $W^{1,2}(\Omega)$ of the set
\[
\{ u|_\Omega : u \in C_c^\infty(\R^d) \mbox{ and } D \cap \supp u = \emptyset \} 
 .   \]
Moreover, define $\cw^{1,2}_D(\Omega)$ to be the closure of 
\[
\{u : u \in W^{1,2}(\Omega) \mbox{ and } D \cap \supp u = \emptyset \} 
\]
in $W^{1,2}(\Omega)$.
Define $\gota_W = \gota|_{W^{1,2}_D(\Omega) \times W^{1,2}_D(\Omega)}$ and 
$\gota_\cw = \gota|_{\cw^{1,2}_D(\Omega) \times \cw^{1,2}_D(\Omega)}$.
Then $\gota_W$ and $\gota_\cw$ are closed sectorial forms.
Let $A_W$ and $A_\cw$ be the operators associated with the forms
$\gota_W$ and $\gota_\cw$, respectively.
Roughly speaking, $A_W$ and $A_\cw$ are two versions of elliptic operators
with Dirichlet boundary conditions on~$D$ and Neumann boundary conditions
on $\partial \Omega \setminus D$.

\begin{thm} \label{tsector301}
For all $p \in [1,\infty]$ the semigroups generated by $-A_W$ and $-A_\cw$ extend consistently
to contraction semigroups on $L_p(\Omega)$, which are $C_0$-semigroups if $p \in [1,\infty)$
and they are bounded holomorphic if $p \in (1,\infty)$.
\end{thm}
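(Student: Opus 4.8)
The plan is to exploit the standard form-methods machinery for sub-Markovian
semigroups, namely the Beurling--Deny and Ouhabaz criteria. Since $\gota_W$ and
$\gota_\cw$ are closed sectorial forms on $L_2(\Omega)$, the associated operators
$A_W$ and $A_\cw$ generate bounded holomorphic $C_0$-semigroups on $L_2(\Omega)$;
the angle of holomorphy is controlled by Theorem~\ref{tsector210}, in fact it
exceeds $\frac{\pi}{2} - \kappa$. So the $p = 2$ part of the statement is already
in hand, and the work is to transfer contractivity and the $C_0$-property to all
$p \in [1,\infty]$, and then to interpolate the holomorphy back for $p \in (1,\infty)$.

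First I would verify that $V := W^{1,2}_D(\Omega)$ (and likewise
$\cw^{1,2}_D(\Omega)$) is \emph{invariant under the normal contractions}: if
$F \colon \Ci \to \Ci$ is Lipschitz with $F(0) = 0$ and $|F(z) - F(w)| \le
|z - w|$, then $u \in V$ implies $F \circ u \in V$ with $|\nabla(F\circ u)| \le
|\nabla u|$ a.e. For real-valued truncations $u \mapsto (0 \vee u) \wedge 1$ this
is the classical Stampacchia lemma, and the defining approximation of
$W^{1,2}_D(\Omega)$ by functions whose support avoids $D$ is preserved under such
truncations, so the closedness under the closure operation is routine. Next, using
that $\mu$ is \textbf{real-valued}, I would check Ouhabaz's criterion for
contractivity on $L_p$: for the convex set $C = \{u : \|u\|_\infty \le 1\}$ one
needs $\RRe \gota_W[(I - P_C)u, P_C u] \ge 0$, where $P_C$ is the pointwise
projection onto the closed unit ball of $\Ci$; this reduces to the inequality
$\RRe \int_\Omega \mu \nabla((|u|-1)^+ \,\mathrm{sgn}\, u)\cdot\overline{\nabla(u/|u|\wedge\text{-part})}$, which because $\mu$ is
real and elliptic collapses to a nonnegative integrand via the chain rule. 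This
gives the contractivity on $L_\infty$, hence by duality on $L_1$ (the adjoint form
has coefficient $\mu^{\mathsf T}$, same hypotheses), and then by Riesz--Thorin on
every $L_p$, $p \in [1,\infty]$. Consistency across $p$ is immediate from the form
construction, since all the semigroups restrict from the $L_2$ one on $L_2 \cap L_p$.

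The $C_0$-property for $p \in [1,\infty)$ follows from consistency plus strong
continuity on $L_2$: a consistent family of contraction semigroups that is strongly
continuous on $L_2$ is strongly continuous on $L_p$ for $p \in [1,\infty)$ because
$L_2 \cap L_p$ is dense in $L_p$ and the semigroups are uniformly bounded. For
$p \in (1,\infty)$, bounded holomorphy is obtained by interpolation: the semigroup
is bounded holomorphic on $L_2$ and contractive (in particular bounded) on $L_p$
for all $p$, so by the Stein interpolation theorem applied to the analytic family
$z \mapsto e^{-z A}$ on a sector, one gets a (smaller) sector of holomorphy on
$L_p$ for every $p$ strictly between $1$ and $\infty$; equivalently one may quote
the standard fact that a consistent family of contraction semigroups that is
holomorphic on $L_2$ is holomorphic on every $L_p$ with $p\in(1,\infty)$. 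The case
$p=1$ and $p=\infty$ genuinely only gives a $C_0$-contraction semigroup (resp.\ a
weak${}^*$-continuous one), which is exactly what the statement claims.

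The main obstacle is the verification of Ouhabaz's $L_\infty$-contractivity
criterion in the complex-valued setting: the projection onto the unit ball of
$\Ci$ is $P_C u = u \wedge 1 := u$ if $|u|\le 1$ and $u/|u|$ otherwise, and one
must show $\RRe\,\gota_W[u - P_Cu, P_Cu] \ge 0$. Writing $u = P_C u + v$ with
$v$ supported on $\{|u| > 1\}$ and carrying out the chain rule for $\nabla P_C u$
on that set, the cross term becomes, after using that $\mu$ is real symmetric-part
dominant, a sum of a manifestly nonnegative quadratic piece and a term
$\int \mu \nabla|u|\cdot\nabla(\text{angular part})$ that vanishes by orthogonality
of radial and angular gradients --- this is the one computation I would carry out
in detail, and it is precisely where the realness of $\mu$ (as opposed to merely
complex sectorial) is used. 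Everything else is either quoted from Ouhabaz's
monograph or is soft functional analysis (duality, interpolation, density).
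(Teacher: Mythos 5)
Your proposal follows essentially the same route as the paper: establish that the semigroups are sub-Markovian via Ouhabaz's invariance criteria (using that $\mu$ is real), then obtain consistent contraction semigroups on all $L_p(\Omega)$ by duality and interpolation, the $C_0$-property by density, and bounded holomorphy for $p \in (1,\infty)$ by Stein-type interpolation (the paper cites \cite{Ouh5} Proposition~3.12 for this last step). The only place where the paper is more careful than your sketch is the space $\cw^{1,2}_D(\Omega)$: since one cannot directly see that the truncation $u \mapsto (1 \wedge |u|)\sgn u$ maps the closure into itself, the paper verifies the criterion on the dense subset $\{u \in W^{1,2}(\Omega) : D \cap \supp u = \emptyset\}$ and invokes the dense-subset version 3'$\Rightarrow$1 of \cite{Ouh5} Theorem~2.13, which is the precise tool that makes your ``closedness under the closure operation is routine'' step rigorous.
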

\begin{proof}
It suffices to show that the semigroups generated by $-A_W$ and $-A_\cw$ are 
submarkovian.
Then the other statements follow by duality, interpolation,
and \cite{Ouh5} Proposition~3.12.

The semigroup generated by $-A_W$ is submarkovian by \cite{Ouh5} Corollary~4.10.
It follows from \cite{Ouh5} Corollary~4.10 and Theorem~2.13, applied to the 
operator with Neumann boundary conditions, that 
$(1 \wedge |u|) \sgn u \in W^{1,2}(\Omega)$ and 
\[
\RRe \gota [ (1 \wedge |u|) \sgn u ,(|u|-1)^+\sgn u ] \geq 0
\]
for all $u \in W^{1,2}(\Omega)$.
Hence one deduces that $(1 \wedge |u|) \sgn u \in \cw_D^{1,2}(\Omega)$
for all $u \in W^{1,2}(\Omega)$ with $D \cap \supp u = \emptyset$.
Since $ \{ u \in W^{1,2}(\Omega) : D \cap \supp u = \emptyset \} $ is 
dense in $\cw^{1,2}_D(\Omega)$, it follows from \cite{Ouh5} Theorem~2.13 3'$\Rightarrow$1
that the semigroup generated by $-A_\cw$ is submarkovian.
\end{proof}

For all $p \in [1,\infty)$ we denote by $-A_{W,p}$ and $-A_{\cw,p}$ the 
generator of the $C_0$-semigroup on $L_p(\Omega)$ 
which is consistent with the semigroup generated by $-A_W$ and $-A_\cw$.

We suppose that the reader is familiar with the concept of bounded $\ch^\infty$-calculus and refer
for details to \cite{Haase} and \cite{DHP}.
We wish to prove upper bounds for the $\ch^\infty$-angle of the operators
$A_{W,p}$ and $A_{\cw,p}$, first for $p = 2$ and then for all $p \in (1,\infty)$.

\begin{thm} \label{tsector302}
Adopt the assumptions and notation as in the beginning of this section.
\begin{tabel} 
\item \label{tsector302-1}
Suppose that $\one_\Omega \not\in W^{1,2}_D(\Omega)$.
Then for all $\varepsilon > 0$ the operator $A_W$ admits an $\ch^\infty$-calculus
on the sector $\Sigma(\kappa + \varepsilon)^\circ$.
Stronger, 
\[
\|f(A_W)\|_{2 \to 2} 
\leq \Big( 2 + \frac{2}{\sqrt{3}} \Big) \esssup_{z \in \Sigma(\kappa + \varepsilon)^\circ} |f(z)|
\]
for all $f \in \ch^\infty(\Sigma(\kappa + \varepsilon)^\circ)$.
\item \label{tsector302-2}
Suppose that $\one_\Omega \in W^{1,2}_D(\Omega)$.
Then for all $\delta,\varepsilon > 0$ the operator $A_W + \delta \, I$ admits an $\ch^\infty$-calculus
on the sector $\Sigma(\kappa + \varepsilon)^\circ$.
Stronger, 
\[
\|f(A_W + \delta \, I)\|_{2 \to 2} 
\leq \Big( 2 + \frac{2}{\sqrt{3}} \Big) \esssup_{z \in \Sigma(\kappa + \varepsilon)^\circ} |f(z)|
\]
for all $f \in \ch^\infty(\Sigma(\kappa + \varepsilon)^\circ)$.
\end{tabel}
Similar statements are valid for the operator $A_\cw$ instead of $A_W$.
\end{thm}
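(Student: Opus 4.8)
The idea is to feed the numerical-range information of Section~\ref{Ssector2} into the Crouzeix--Delyon theorem, whose universal constant for an operator with numerical range in a convex set is exactly $2 + \frac{2}{\sqrt{3}}$. By Corollary~\ref{csector203}, applied with $V = W^{1,2}_D(\Omega)$ (which contains $C_c^\infty(\Omega)$), one has $\overline{\Lambda(A_W)} \subseteq \Sigma(\kappa)$, and $\Sigma(\kappa)$ is convex since $\kappa < \frac{\pi}{2}$. Because, for $0 < \varepsilon' \leq \varepsilon$, every $f \in \ch^\infty(\Sigma(\kappa+\varepsilon)^\circ)$ restricts to an element of $\ch^\infty(\Sigma(\kappa+\varepsilon')^\circ)$ of no larger sup-norm, it suffices to prove both estimates for all sufficiently small $\varepsilon > 0$; so I may assume $\kappa + \varepsilon < \frac{\pi}{2}$.

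Fix $\delta > 0$ and set $B_\delta = A_W + \delta I$. This operator is invertible, since $\RRe (B_\delta u, u)_{L_2} \geq \delta \, \|u\|_{L_2}^2$, and $\overline{\Lambda(B_\delta)} = \delta + \overline{\Lambda(A_W)} \subseteq \delta + \Sigma(\kappa)$. The one geometric point is that $\delta + \Sigma(\kappa)$ is a closed convex subset of the open sector $\Sigma(\kappa+\varepsilon)^\circ$: for $z \in \Sigma(\kappa)$ one has $\RRe(\delta + z) \geq \delta > 0$, so $\delta + z \neq 0$, and adding the positive real number $\delta$ to $z$ strictly decreases $|\arg z|$ unless $z$ lies on the positive real axis, so $|\arg(\delta + z)| < \kappa$; hence $\delta + \Sigma(\kappa) \subseteq \Sigma(\kappa)^\circ \subseteq \Sigma(\kappa+\varepsilon)^\circ$. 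In particular every $f \in \ch^\infty(\Sigma(\kappa+\varepsilon)^\circ)$ is bounded and holomorphic on a neighbourhood of $\delta + \Sigma(\kappa)$, with
\[
\sup_{w \in \delta + \Sigma(\kappa)} |f(w)| \leq \esssup_{z \in \Sigma(\kappa+\varepsilon)^\circ} |f(z)| .
\]

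Next apply the Crouzeix--Delyon theorem to $B_\delta$, which by Theorem~\ref{tsector210} is sectorial of angle $\kappa < \kappa + \varepsilon$: since $\overline{\Lambda(B_\delta)}$ lies in the convex set $\delta + \Sigma(\kappa) \subseteq \Sigma(\kappa+\varepsilon)^\circ$ and $f$ is bounded and holomorphic near it, $B_\delta$ admits a bounded $\ch^\infty(\Sigma(\kappa+\varepsilon)^\circ)$-calculus and
\[
\|f(B_\delta)\|_{2 \to 2} \leq \Big(2 + \frac{2}{\sqrt{3}}\Big) \sup_{w \in \delta + \Sigma(\kappa)} |f(w)| \leq \Big(2 + \frac{2}{\sqrt{3}}\Big) \esssup_{z \in \Sigma(\kappa+\varepsilon)^\circ} |f(z)|
\]
for all $f \in \ch^\infty(\Sigma(\kappa+\varepsilon)^\circ)$. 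As $B_\delta = A_W + \delta I$, this proves~\ref{tsector302-2}. For~\ref{tsector302-1} assume $\one_\Omega \notin W^{1,2}_D(\Omega)$; then $A_W$ is injective, because $\gota_W[u] = 0$ forces $m \, \|\nabla u\|_{L_2}^2 \leq \RRe \gota_W[u] = 0$, whence $u$ is constant by connectedness of $\Omega$ and therefore $u \in \Ci \, \one_\Omega \cap W^{1,2}_D(\Omega) = \{0\}$; the same computation for the adjoint form gives $\ker A_W^* = \{0\}$, so $A_W$ also has dense range. Thus $A_W$ is an injective sectorial operator with dense range, so $f(A_W)$ is well defined, $f(A_W + \delta I) \to f(A_W)$ strongly as $\delta \downarrow 0$ by the standard convergence result for the functional calculus (see~\cite{Haase}), and letting $\delta \downarrow 0$ in the previous display yields~\ref{tsector302-1}.

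The statement for $A_\cw$ follows in exactly the same way, with $\cw^{1,2}_D(\Omega)$ in place of $W^{1,2}_D(\Omega)$ throughout: Corollary~\ref{csector203} still gives $\overline{\Lambda(A_\cw)} \subseteq \Sigma(\kappa)$, and the kernels of $A_\cw$ and $A_\cw^*$ are contained in the constant functions that belong to $\cw^{1,2}_D(\Omega)$, so the role of the hypothesis is now played by whether or not $\one_\Omega \in \cw^{1,2}_D(\Omega)$. I expect the real work to be the invocation of Crouzeix--Delyon: turning its classical formulation (for polynomials, or for functions holomorphic on a genuine neighbourhood of the numerical range) into an honest bounded $\ch^\infty$-calculus on the \emph{open} sector $\Sigma(\kappa+\varepsilon)^\circ$ for the unbounded operator $B_\delta$, and controlling the limit $\delta \downarrow 0$ in case~\ref{tsector302-1}. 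This is also where the loss of $\varepsilon$ in the opening angle is forced: $B_\delta$ is only known to be sectorial of angle $\kappa$ --- its numerical range is asymptotic to the two boundary rays of $\Sigma(\kappa)$ --- and not of any angle strictly below $\kappa$, so the method gives the calculus on $\Sigma(\kappa+\varepsilon)^\circ$ but not on $\Sigma(\kappa)^\circ$.
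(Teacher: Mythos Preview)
Your proof is correct and uses the same ingredients as the paper's: the numerical range inclusion from Corollary~\ref{csector203}, injectivity of $A_W$ (respectively of $A_W + \delta I$), and the Crouzeix--Delyon theorem. The only difference is that the paper handles case~\ref{tsector302-1} directly by applying Crouzeix--Delyon to the injective operator $A_W$ itself, whereas you first establish~\ref{tsector302-2} and then pass to the limit $\delta \downarrow 0$; your limiting argument (and the separate verification of dense range) is correct but unnecessary.
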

\begin{proof}
`\ref{tsector302-1}'.
Since $\Omega$ is connected, the operator $A_W$ is injective.
Moreover $\Lambda(A_W) \subset \Sigma(\kappa)$ by Corollary~\ref{csector203}.
Then the claim follows from the 
Crouzeix--Delyon theorem \cite{CrouzeixDelyon} Theorem~1.

`\ref{tsector302-2}'.
Let $\delta > 0$.
Then the operator $A_W + \delta \, I$ is injective
and $\Lambda(A_W + \delta \, I) \subset \Sigma(\kappa)$.
Then one can argue as in Statement~\ref{tsector302-1}.

The proof for $A_\cw$ is word-by-word the same.
\end{proof}

Next we consider the operators on $L_p(\Omega)$.

\begin{thm} \label{tsector303}
Adopt the assumptions and notation as in the beginning of this section.
Let $p \in (1,\infty)$ and set 
$\kappa_p = (1 - |1 - \frac{2}{p}|) \kappa + |1 - \frac{2}{p}| \, \frac{\pi}{2}$.
Then one has the following.
\begin{tabel} 
\item \label{tsector303-1}
Suppose that $\one_\Omega \not\in W^{1,2}_D(\Omega)$.
Then for all $\varepsilon > 0$ the operator $A_{W,p}$ admits an $\ch^\infty$-calculus
on the sector $\Sigma(\kappa_p + \varepsilon)^\circ$.
\item \label{tsector303-2}
Suppose that $\one_\Omega \in W^{1,2}_D(\Omega)$.
Then for all $\delta,\varepsilon > 0$ the operator $A_{W,p} + \delta \, I$ admits an $\ch^\infty$-calculus
on the sector $\Sigma(\kappa_p + \varepsilon)^\circ$.
\end{tabel}
Similar statements are valid for the operator $A_{\cw,p}$ instead of $A_{W,p}$.
\end{thm}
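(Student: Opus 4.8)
The plan is to bootstrap from $p=2$ to general $p$ using the interpolation theorem of Kalton--Kunstmann--Weis for the bounded $\ch^\infty$-calculus, feeding it the $L_2$-estimate of Theorem~\ref{tsector302} together with the consistency, contractivity and holomorphy supplied by Theorem~\ref{tsector301}. Throughout write $s = |1 - \frac{2}{p}| \in [0,1)$, so that $\kappa_p = (1-s)\,\kappa + s \, \frac{\pi}{2}$, and fix $\varepsilon > 0$; since $\ch^\infty(\Sigma(\theta')^\circ) \subset \ch^\infty(\Sigma(\theta)^\circ)$ for $\theta \leq \theta'$ we may assume $\kappa_p + \varepsilon < \frac{\pi}{2}$.

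For statement~\ref{tsector303-1}, first note that $A_{W,2}$ is injective: if $u \in W^{1,2}_D(\Omega)$ and $\gota_W[u] = 0$, then $0 = \RRe \gota_W[u] \geq m \int_\Omega |\nabla u|^2$, so $\nabla u = 0$, hence $u$ is constant on the connected set $\Omega$ and therefore $u \in \spann\{\one_\Omega\} \cap W^{1,2}_D(\Omega) = \{0\}$. A standard consistency argument then shows that $A_{W,p}$ is injective, and together with the boundedness and holomorphy of the semigroup on $L_p(\Omega)$ from Theorem~\ref{tsector301} the operator $A_{W,p}$ is sectorial. By Theorem~\ref{tsector302}\ref{tsector302-1} the operator $A_{W,2}$ has a bounded $\ch^\infty(\Sigma(\kappa + \varepsilon)^\circ)$-calculus on $L_2(\Omega)$, while by Theorem~\ref{tsector301} the semigroup generated by $-A_W$ is submarkovian, hence consistent and contractive on every $L_q(\Omega)$ with $q \in [1,\infty]$ and bounded holomorphic on $L_q(\Omega)$ for $q \in (1,\infty)$. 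These are precisely the hypotheses of the Kalton--Kunstmann--Weis interpolation theorem, which therefore yields a bounded $\ch^\infty(\Sigma(\theta_p)^\circ)$-calculus for $A_{W,p}$ with
\[
\theta_p = (1-s)(\kappa + \varepsilon) + s \, \frac{\pi}{2} = \kappa_p + (1-s)\,\varepsilon \leq \kappa_p + \varepsilon ,
\]
which is the assertion.

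For statement~\ref{tsector303-2}, fix also $\delta > 0$. The semigroup generated by $-(A_W + \delta \, I)$ equals $e^{-\delta t}\, e^{-tA_W}$, so it is still consistent across the $L_q$-scale, contractive on every $L_q(\Omega)$ with $q \in [1,\infty]$ and bounded holomorphic on $L_q(\Omega)$ for $q \in (1,\infty)$, while $A_{W,p} + \delta \, I$ is now invertible and in particular sectorial. By Theorem~\ref{tsector302}\ref{tsector302-2} the operator $A_{W,2} + \delta \, I$ has a bounded $\ch^\infty(\Sigma(\kappa + \varepsilon)^\circ)$-calculus on $L_2(\Omega)$, and the Kalton--Kunstmann--Weis theorem applied verbatim gives a bounded $\ch^\infty(\Sigma(\kappa_p + \varepsilon)^\circ)$-calculus for $A_{W,p} + \delta \, I$. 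Since the argument used only that $-A_W$ generates a submarkovian semigroup and the $L_2$-bounds of Theorem~\ref{tsector302}, and both hold for $A_\cw$ as well, the corresponding statements for $A_{\cw,p}$ follow word for word.

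The step I expect to be the main obstacle is matching the hypotheses of the Kalton--Kunstmann--Weis interpolation theorem in the precise form required --- in particular identifying the compatibility and contractive-regularity condition on the semigroup on the $L_p$-scale with submarkovianity, and checking that the resulting angle is genuinely bounded by $\kappa_p + \varepsilon$ rather than merely by some constant below $\frac{\pi}{2}$. The transfer of injectivity, and hence sectoriality, of $A_W$ from $L_2$ to $L_p$ through consistency is a minor but genuinely necessary point, delicate only when $|\Omega| = \infty$.
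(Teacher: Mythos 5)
There is a genuine gap at the step you yourself flagged as the main obstacle: the Kalton--Kunstmann--Weis interpolation result (\cite{KKW} Proposition~4.9, the one the paper uses) requires a \emph{bounded $\ch^\infty$-calculus at both interpolation endpoints}, with specified angles; it then produces the convex combination of those angles on the intermediate space. Consistency, contractivity and holomorphy of the semigroup on the $L_q$-scale do not constitute a second endpoint for this theorem, so the inference ``these are precisely the hypotheses of KKW, hence a calculus with angle $(1-s)(\kappa+\varepsilon)+s\frac{\pi}{2}$'' does not go through as written. Moreover the implicit second endpoint in your angle formula is $L_1$ or $L_\infty$ with angle exactly $\frac{\pi}{2}$, and on those spaces there is no bounded $\ch^\infty$-calculus (nor a bounded holomorphic semigroup) to interpolate against.

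The missing ingredient is Duong's theorem: the semigroup is \emph{positive} (by \cite{Ouh5} Corollary~4.3, or because submarkovian semigroups are positive) and contractive on every $L_q(\Omega)$, and $A_{W,q}$ is injective, so by \cite{Duo2} Theorem~2 the operator $A_{W,q}$ admits a bounded $\ch^\infty$-calculus on $\Sigma(\theta)^\circ$ for every $\theta\in(\frac{\pi}{2},\pi)$ and every $q\in(1,\infty)$. This supplies a legitimate second endpoint. One then interpolates, for $p\in(2,\infty)$, between $L_2$ (angle $\kappa+\varepsilon$) and $L_n$ (angle $\frac{\pi}{2}+\varepsilon$) with $\frac{1}{p}=\frac{\theta_n}{2}+\frac{1-\theta_n}{n}$, obtaining the angle $\theta_n\kappa+(1-\theta_n)\frac{\pi}{2}+\varepsilon$, and finally lets $n\to\infty$ so that $\theta_n\to\frac{2}{p}$; this limiting step is also what justifies the exact weight $|1-\frac{2}{p}|$ on $\frac{\pi}{2}$ in $\kappa_p$, which your argument asserts directly. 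The case $p\in(1,2)$ follows similarly or by duality, and the rest of your outline (injectivity of $A_{W,2}$, passage to $A_{W,p}+\delta I$, and the transfer to $A_{\cw,p}$) matches the paper's proof.
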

\begin{proof}
`\ref{tsector303-1}'.
The semigroup $(e^{-t A_W})_{t > 0}$ is positive by \cite{Ouh5} Corollary~4.3.
Hence by consistency and density the semigroup $(e^{-t A_{W,p}})_{t > 0}$ is positive
for all $p \in [1,\infty)$.
Since $- A_{W,p}$ is injective and the generator of a positive contraction semigroup,
it follows from Duong \cite{Duo2} Theorem~2 that for all $\theta \in (\frac{\pi}{2},\pi)$
the operator $A_{W,p}$ admits an $\ch^\infty$-calculus on the sector $\Sigma(\theta)^\circ$.

Finally we use interpolation. 
Suppose $p \in (2,\infty)$ and $\varepsilon > 0$.
For all $n \in \Ni$ with $n > p$ we interpolate between $2$ and $n$.
Let $\theta_n \in (0,1)$ be such that 
$\frac{1}{p} = \frac{\theta_n}{2} + \frac{1 - \theta_n}{n}$.
Then $\lim_{n \to \infty} \theta_n = \frac{2}{p}$. 
Now the operator $A_{W,2}$ admits an $\ch^\infty$-calculus on the sector $\Sigma(\kappa + \varepsilon)^\circ$
and the operator $A_{W,n}$ admits an $\ch^\infty$-calculus on the sector $\Sigma(\frac{\pi}{2} + \varepsilon)^\circ$.
One deduces from \cite{KKW} Proposition~4.9 that 
the operator $A_{W,p}$ admits an $\ch^\infty$-calculus on the sector 
$\Sigma(\theta_n \, \kappa + (1 - \theta_n) \, \frac{\pi}{2} + \varepsilon)^\circ$.
Taking $n$ large enough, the statement follows.

If $p \in (1,2)$, then the proof is similar, or one can use duality. 

`\ref{tsector303-2}'.
The proof is similar.

For the operators $A_{\cw,p}$ the argument is almost the same. 
The only thing that is not immediately clear is the positivity of the semigroup.
But that can be deduced as at the end of the proof of Theorem~\ref{tsector301}.
\end{proof}

We emphasise that $\kappa_p < \frac{\pi}{2}$ for all $p \in (1,\infty)$.
As an application of Theorem~\ref{tsector303} we obtain bounded imaginary powers 
with constant smaller than $\frac{\pi}{2}$.

\begin{cor} \label{csector304}
Adopt the assumptions and notation as in the beginning of this section.
Let $p \in (1,\infty)$ and let $\kappa_p \in [\kappa,\frac{\pi}{2})$ be as 
in Theorem~\ref{tsector303}.
Then for all $\varepsilon > 0$ there exists a $c > 0$ such that 
$\|(A_{W,p} + I)^{is}\|_{p \to p} \leq c \, e^{(\kappa_p + \varepsilon) |s|}$
and $\|(A_{\cw,p} + I)^{is}\|_{p \to p} \leq c \, e^{(\kappa_p + \varepsilon) |s|}$
for all $s \in \Ri$.
\end{cor}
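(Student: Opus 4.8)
The plan is to combine Theorem~\ref{tsector303} with the standard fact that a bounded $\ch^\infty$-calculus controls the growth of the imaginary powers. Recall from \cite{Haase} that if $B$ is an injective sectorial operator with dense domain and dense range on a Banach space and $B$ admits a bounded $\ch^\infty(\Sigma(\theta)^\circ)$-calculus with bound $c$, then $B^{is}$ is the operator obtained by inserting the function $z \mapsto z^{is}$ into this calculus; since $|z^{is}| = e^{-s \arg z}$, the supremum of $|z^{is}|$ over $\Sigma(\theta)^\circ$ equals $e^{\theta |s|}$, and therefore $\|B^{is}\| \le c \, e^{\theta |s|}$ for all $s \in \Ri$. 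It thus suffices to prove that, for every $\varepsilon > 0$, the operators $A_{W,p} + I$ and $A_{\cw,p} + I$ admit a bounded $\ch^\infty(\Sigma(\kappa_p + \varepsilon)^\circ)$-calculus. Since $e^{(\kappa_p + \varepsilon)|s|}$ is increasing in $\varepsilon$, we may in addition assume $\kappa_p + \varepsilon < \frac{\pi}{2}$.

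First I observe that $A_{W,p} + I$ and $A_{\cw,p} + I$ are boundedly invertible: by Theorem~\ref{tsector301} the operators $-A_{W,p}$ and $-A_{\cw,p}$ generate contraction semigroups, so their spectra are contained in $\{ z \in \Ci : \RRe z \le 0 \}$, and in particular $-1$ lies in the resolvent set. Hence $A_{W,p} + I$ and $A_{\cw,p} + I$ are injective sectorial operators with dense domain and dense range, so the principle above applies to them once the $\ch^\infty$-bound is established.

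If $\one_\Omega \in W^{1,2}_D(\Omega)$, then part~\ref{tsector303-2} of Theorem~\ref{tsector303}, applied with $\delta = 1$, states exactly that $A_{W,p} + I$---and, by the analogous statement for $A_{\cw,p}$ contained in that theorem, also $A_{\cw,p} + I$---admits a bounded $\ch^\infty(\Sigma(\kappa_p + \varepsilon)^\circ)$-calculus, so there is nothing more to do. If $\one_\Omega \notin W^{1,2}_D(\Omega)$, then by part~\ref{tsector303-1} of Theorem~\ref{tsector303} the operator $A_{W,p}$ admits a bounded $\ch^\infty(\Sigma(\kappa_p + \varepsilon)^\circ)$-calculus, say with bound $c_0$. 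Since $\kappa_p + \varepsilon < \frac{\pi}{2}$, an elementary computation gives $1 + \Sigma(\kappa_p + \varepsilon)^\circ \subset \Sigma(\kappa_p + \varepsilon)^\circ$, so the translation $z \mapsto z + 1$ maps $\Sigma(\kappa_p + \varepsilon)^\circ$ into itself; consequently, for every $g \in \ch^\infty(\Sigma(\kappa_p + \varepsilon)^\circ)$ the function $z \mapsto g(z + 1)$ again belongs to $\ch^\infty(\Sigma(\kappa_p + \varepsilon)^\circ)$ with supremum norm at most $\|g\|_\infty$. By the composition rule for the holomorphic functional calculus (\cite{Haase}), $g(A_{W,p} + I)$ equals the result of inserting $z \mapsto g(z + 1)$ into the $\ch^\infty$-calculus of $A_{W,p}$, and hence $\|g(A_{W,p} + I)\| \le c_0 \, \|g\|_\infty$. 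Thus $A_{W,p} + I$, and in the same way $A_{\cw,p} + I$, admits a bounded $\ch^\infty(\Sigma(\kappa_p + \varepsilon)^\circ)$-calculus.

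Applying the principle of the first paragraph with $\theta = \kappa_p + \varepsilon$ to $B = A_{W,p} + I$ and to $B = A_{\cw,p} + I$, and taking $c$ to be the larger of the two $\ch^\infty$-bounds, gives the claimed estimates. The only step that requires genuine care is the case $\one_\Omega \notin W^{1,2}_D(\Omega)$, where one has to pass from the $\ch^\infty$-calculus of $A_{W,p}$ to that of $A_{W,p} + I$; this rests on the elementary sector inclusion $1 + \Sigma(\kappa_p + \varepsilon)^\circ \subset \Sigma(\kappa_p + \varepsilon)^\circ$, which holds precisely because $\kappa_p + \varepsilon < \frac{\pi}{2}$, together with the composition rule. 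Everything else is routine.
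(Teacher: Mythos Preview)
Your proof is correct and follows the same approach as the paper, which simply says ``Apply Theorem~\ref{tsector303} to the function $z \mapsto z^{is}$.'' You have carefully filled in the details the paper leaves implicit, in particular the passage from the $\ch^\infty$-calculus of $A_{W,p}$ to that of $A_{W,p}+I$ in the case $\one_\Omega \notin W^{1,2}_D(\Omega)$ via the sector inclusion $1+\Sigma(\kappa_p+\varepsilon)^\circ \subset \Sigma(\kappa_p+\varepsilon)^\circ$ and the composition rule.
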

\begin{proof}
Apply Theorem~\ref{tsector303} to the function $z \mapsto z^{is}$.
\end{proof}

\begin{cor} \label{csector305}
Adopt the assumptions and notation as in the beginning of this section.
For all $p \in (1,\infty)$ the operators $A_{W,p}$ and $A_{\cw,p}$ 
satisfy maximal parabolic regularity in~$L_p(\Omega)$.
\end{cor}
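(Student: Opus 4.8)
The plan is to deduce maximal parabolic regularity from the bounded $\ch^\infty$-calculus established in Theorem~\ref{tsector303} (or, what suffices, from the bounded imaginary powers in Corollary~\ref{csector304}) by invoking the Dore--Venni theorem. The key point is that $L_p(\Omega)$ is a UMD space for all $p \in (1,\infty)$, so the Dore--Venni mechanism is available: an operator on a UMD space with bounded imaginary powers whose power-angle is strictly less than $\frac{\pi}{2}$ has maximal parabolic regularity. Since Corollary~\ref{csector304} gives $\|(A_{W,p}+I)^{is}\|_{p\to p} \leq c\, e^{(\kappa_p+\varepsilon)|s|}$ with $\kappa_p < \frac{\pi}{2}$, one may choose $\varepsilon$ small enough that $\kappa_p + \varepsilon < \frac{\pi}{2}$, and the hypothesis of Dore--Venni is met by $A_{W,p}+I$; the same holds for $A_{\cw,p}+I$.

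First I would reduce to the invertible case: replace $A_{W,p}$ by $A_{W,p}+I$, which is invertible (indeed sectorial and boundedly invertible) and differs from $A_{W,p}$ by a bounded operator, so maximal regularity for one is equivalent to maximal regularity for the other on any finite time interval. Second, I would record that $A_{W,p}+I$ and $A_{\cw,p}+I$ have bounded imaginary powers with power-angle $\kappa_p < \frac{\pi}{2}$, citing Corollary~\ref{csector304}. Third, I would apply the Dore--Venni theorem (as presented, e.g., in \cite{DHP} or the classical reference) on the UMD space $L_p(\Omega)$ to conclude that $A_{W,p}+I$, hence $A_{W,p}$, satisfies maximal parabolic regularity in $L_p(\Omega)$; and likewise for $A_{\cw,p}$.

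There is essentially no serious obstacle here, since all the analytic work was done in the preceding results; the one point requiring a little care is the passage between the shifted and unshifted operators, and the standing convention for ``maximal parabolic regularity'' (whether on a finite interval $(0,\tau)$, for which the shift is harmless, or on $(0,\infty)$, which would require the operator itself to be invertible). In the mixed-boundary-condition setting where $\one_\Omega \in W^{1,2}_D(\Omega)$, the operator $A_{W,p}$ is not invertible and only the shifted operator has bounded imaginary powers with a good angle, so the natural and cleanest formulation is on finite time intervals; I would state it that way and note that on finite intervals maximal regularity is insensitive to bounded (in particular additive scalar) perturbations. Alternatively, one could bypass Dore--Venni entirely and invoke the fact that a bounded $\ch^\infty$-calculus on a sector of angle $<\frac{\pi}{2}$ on a UMD space implies $R$-sectoriality of the same angle, hence maximal regularity by the Weis theorem; I would mention this as the more modern route but carry out the Dore--Venni argument as the primary one, consistent with the introduction's remark.
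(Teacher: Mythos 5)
Your proposal is correct and follows exactly the paper's route: the paper's proof is the one-line invocation of the Dore--Venni theorem together with Corollary~\ref{csector304}, which is precisely your argument (your additional care about the shift $A_{W,p}+I$ versus $A_{W,p}$ and the finite-interval formulation is a sensible elaboration of details the paper leaves implicit).
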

\begin{proof}
This follows from the Dore--Venni theorem \cite{DV} and Corollary~\ref{csector304}.
\end{proof}

We emphasise that $\Omega$ is merely an open connected (non-empty) set.
It does not need to have the doubling property, nor to be bounded.

\section{An outlook to more general measure spaces} \label{Ssector4}

In this section we consider a bounded domain and an elliptic 
operator with complex coefficients subject to 
mixed boundary conditions and domain inhomogeneities supported on a lower
dimensional hypersurface, enforcing a jump in the conormal derivative.

Let $\Omega \subset \Ri^d$ be open, bounded and connected.
Let $D \subset \partial \Omega$ be closed.
Further let $\Gamma_0$ be a Borel subset of $\Omega$ which is a 
$(d-1)$-set in the sense of Jonnson--Wallin (see \cite{JW} Subsection~VII.1.1), 
that is, there are $c_1,c_2 > 0$ such that 
\[
c_1 \, r^{d-1}
\leq \ch_{d-1}(B(x,r) \cap \Gamma_0) 
\leq c_2 \, r^{d-1}
\]
for all $x \in \Gamma_0$ and $r \in (0,1]$, where $\ch_{d-1}$ is the 
$(d-1)$-dimensional Hausdorff measure.
We emphasise that $\Gamma_0$ does not have to be closed.
Finally, we suppose that every element of $\overline{\partial \Omega \setminus D}$
admits a bi-Lipschitz chart. 

Set $\Gamma = \Gamma_0 \cup (\partial \Omega \setminus D)$.
Let $\rho$ be the restriction of $\ch_{d-1}$ to the set $\Gamma$.
Define $\Li_2 = L_2(\Omega \cup \Gamma, dx + d\rho)$.
There is a natural isomorphism from $\Li_2$ onto $L_2(\Omega,dx) \oplus L_2(\Gamma, d\rho)$.
We identify $\Li_2$ with $L_2(\Omega,dx) \oplus L_2(\Gamma, d\rho)$
in the natural way.
In this section we consider an m-sectorial operator in $\Li_2$.

For all $u \in L_{1,\loc}(\Omega)$ define the function $\Tr u$ by 
\[
\dom(\Tr u) 
= \{ x \in \Gamma : \lim_{r \downarrow 0} 
    \frac{1}{|\Omega \cap B(x,r)|} \int_{\Omega \cap B(x,r)} u(y) \, dy
      \mbox{ exists} \}
\]
and 
\[
(\Tr u)(x) = \lim_{r \downarrow 0} 
    \frac{1}{|\Omega \cap B(x,r)|} \int_{\Omega \cap B(x,r)} u(y) \, dy
\]
for all $x \in \dom(\Tr)$.
(See \cite{JW} Section~VIII.1.1.)
It follows from \cite{EMR} Proposition~2.8 that $\Tr u \in L_2(\Gamma,d\rho)$ 
for all $u \in W^{1,2}_D(\Omega)$.
Define $j \colon W^{1,2}_D(\Omega) \to \Li_2$ by 
\[
j(u) 
= (u, \Tr u)
\in L_2(\Omega,dx) \oplus L_2(\Gamma, d\rho)
 .  \]
By \cite{EMR} Lemma~2.10(i) the map $j$ is continuous and has dense range.


\begin{thm} \label{tsector401}
Adopt the above notation and assumptions.
Let $\mu \colon \Omega \to \Ci^{d \times d}$
be a bounded measurable function.
Suppose that there exists an $m > 0$ such that 
$\RRe \mu(x) \xi \cdot \overline \xi \geq m \, |\xi|^2$
for all $\xi \in \Ci^d$.
Define the sesquilinear form 
$\gota \colon W^{1,2}_D(\Omega) \times W^{1,2}_D(\Omega) \to \Ci$ by 
\[
\gota[u,v]= \int_\Omega \mu \nabla u \cdot \overline{\nabla v}
 .  \]
Define $\kappa = \arctan \sqrt{\bigl (\frac {M}{m}\bigr)^2- 1}$ and
$M= \esssup_{x \in \Omega} \|\mu(x)\|$.
Let $A$ be the m-sectorial operator in $\Li_2$ associated with $(\gota,j)$.
Then $\sigma(A) \subset \overline{\Lambda(A)} \subset \Sigma(\kappa)$.
Moreover, let $\theta \in (\kappa,\frac{\pi}{2})$.
Then 
\[
\|(A + \lambda \, I)^{-1}\|_{\Li_2 \to \Li_2}
\leq \frac{M}{ m \, \sin \theta - \sqrt{M^2 - m^2} \, \cos \theta} \, \frac{1}{|\lambda|}
\]
for all $\lambda \in \Sigma(\pi - \theta)$.
\end{thm}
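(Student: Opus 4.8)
The plan is to reduce Theorem~\ref{tsector401} to the abstract $j$-elliptic setting of Theorem~\ref{tsector210}, exactly as Corollary~\ref{csector203} reduced the pure $W^{1,2}$ case. The three hypotheses of Theorem~\ref{tsector210} to verify are: (i) $\gota[u] \in \Sigma(\kappa)$ for all $u \in \dom(\gota) = W^{1,2}_D(\Omega)$; (ii) $j \colon W^{1,2}_D(\Omega) \to \Li_2$ has dense range; and (iii) $\gota$ is $j$-elliptic. Once these are in place, Theorem~\ref{tsector210} immediately yields $\sigma(A) \subset \overline{\Lambda(A)} \subset \Sigma(\kappa)$ together with the resolvent bound $\|(A + \lambda I)^{-1}\|_{\Li_2 \to \Li_2} \leq 1/d(-\lambda, \Sigma(\kappa))$, and then the stated inequality follows from the same elementary trigonometry used in the proof of Corollary~\ref{csector203} (for $\theta \in (\kappa, \tfrac{\pi}{2})$ and $\lambda \in \Sigma(\pi - \theta)$, one has $d(-\lambda, \Sigma(\kappa)) \geq |\lambda|\bigl(\sin\theta - \tan\kappa\,\cos\theta\bigr) = |\lambda|\,\frac{m\sin\theta - \sqrt{M^2-m^2}\,\cos\theta}{m}$, using $\tan\kappa = \sqrt{M^2-m^2}/m$ and $\cos\kappa = m/M$).

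For (i), the form $\gota$ here is literally the restriction to $W^{1,2}_D(\Omega)$ of the form in Theorem~\ref{tsector202}, so the sector inclusion $\gota[u] \in \Sigma(\kappa)$ for all $u \in W^{1,2}_D(\Omega) \subset W^{1,2}(\Omega)$ is a direct consequence of Theorem~\ref{tsector202}. For (ii), density of the range of $j$ is cited in the text preceding the theorem: by \cite{EMR} Lemma~2.10(i) the map $j$ is continuous and has dense range, so nothing further is needed. The remaining point (iii), $j$-ellipticity, is the one requiring a genuine argument: I must produce $\omega \in \R$ and $\mu_0 > 0$ such that $\RRe \gota[u] + \omega\,\|j(u)\|_{\Li_2}^2 \geq \mu_0\,\|u\|_{W^{1,2}_D(\Omega)}^2$ for all $u \in W^{1,2}_D(\Omega)$.

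The key estimate for $j$-ellipticity is that $\RRe \gota[u] = \RRe \int_\Omega \mu\nabla u \cdot \overline{\nabla u} \geq m\,\|\nabla u\|_{L_2(\Omega)}^2$, while $\|j(u)\|_{\Li_2}^2 = \|u\|_{L_2(\Omega)}^2 + \|\Tr u\|_{L_2(\Gamma,d\rho)}^2 \geq \|u\|_{L_2(\Omega)}^2$; hence with $\omega = 1$ and $\mu_0 = \min\{m,1\}$,
\[
\RRe \gota[u] + \|j(u)\|_{\Li_2}^2
\geq m\,\|\nabla u\|_{L_2(\Omega)}^2 + \|u\|_{L_2(\Omega)}^2
\geq \mu_0\,\|u\|_{W^{1,2}(\Omega)}^2 .
\]
Since the $W^{1,2}_D(\Omega)$-norm is just the $W^{1,2}(\Omega)$-norm restricted to the subspace, this is exactly $j$-ellipticity. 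I also need $\gota$ to be continuous on $W^{1,2}_D(\Omega)$, which is immediate from boundedness of $\mu$: $|\gota[u,v]| \leq M\,\|\nabla u\|_{L_2}\,\|\nabla v\|_{L_2} \leq M\,\|u\|_{W^{1,2}}\,\|v\|_{W^{1,2}}$. With continuity and $j$-ellipticity established, the operator $A$ associated with $(\gota,j)$ is m-sectorial (this is the defining property invoked in Theorem~\ref{tsector210}), and the conclusion follows. The main obstacle is really bookkeeping rather than mathematics: confirming that the trace map $\Tr$ and the inner product on $\Li_2 = L_2(\Omega,dx) \oplus L_2(\Gamma,d\rho)$ interact with the form as claimed, i.e. that $j$ is well-defined on all of $W^{1,2}_D(\Omega)$ (guaranteed by \cite{EMR} Proposition~2.8) and that the first component of $j(u)$ genuinely reproduces $u$ so that the coercivity estimate above goes through verbatim.
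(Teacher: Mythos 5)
Your proposal follows exactly the paper's route: the paper's entire proof is ``This follows from Theorems~\ref{tsector202} and \ref{tsector210}'', and you correctly fill in the three hypotheses (sector inclusion via Theorem~\ref{tsector202}, dense range of $j$ via the cited lemma, and $j$-ellipticity via the coercivity of $\RRe\mu$ together with $\|j(u)\|_{\Li_2}\geq\|u\|_{L_2(\Omega)}$). The only blemish is in the closing trigonometry: the correct distance estimate is $d(-\lambda,\Sigma(\kappa))\geq|\lambda|\sin(\theta-\kappa)=|\lambda|\,\bigl(\cos\kappa\,\sin\theta-\sin\kappa\,\cos\theta\bigr)=|\lambda|\,\frac{m\sin\theta-\sqrt{M^2-m^2}\,\cos\theta}{M}$ (since $\cos\kappa=m/M$ and $\sin\kappa=\sqrt{M^2-m^2}/M$), whereas your bound $|\lambda|\,(\sin\theta-\tan\kappa\,\cos\theta)=|\lambda|\sin(\theta-\kappa)/\cos\kappa$ overstates the distance by the factor $1/\cos\kappa$ and is false for $-\lambda$ at argument exactly $\theta$; the corrected computation is what yields the factor $M$ (not $m$) in the numerator of the stated resolvent bound.
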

\begin{proof}
This follows from Theorems~\ref{tsector202} and \ref{tsector210}.
\end{proof}

Let us mention that this provides an adequate functional analytic instrument 
to give equations with dynamical boundary conditions like 
\begin{alignat*}{3}
\partial_t u-\nabla \cdot \mu \nabla u & =  f_\Omega  & 
\qquad & \text{on }\,I \times (\Omega \setminus \Gamma_0) , \\
u & =  0  & &\text {on }\,  I \times D, \\
\partial_t u+\nu \cdot \mu \nabla u  & =  f_1 &  & 
\text{on }\, I \times (\partial \Omega \setminus D), \\
 \partial_t u+[\nu_{\Gamma_0} \cdot \mu \nabla u]  & =   f_0
  && \text{on }\, I \times \Gamma_0, \\
u(0) & =  u_0  & &\text{on }\, \Omega \cup (\partial \Omega \setminus D), 
\end{alignat*}
a precise meaning, inclusively its quasilinear variants, see \cite{EMR}.
For a strict derivation of dynamical boundary conditions in various
physical contexts see \cite{Gold1}.

\subsection*{Acknowledgements}
The first named author is grateful for hospitality
at the WIAS and he wishes to thank the WIAS for support.
Part of this work is supported by the Marsden Fund Council from Government
funding, administered by the Royal Society of New Zealand.

\bibliography{refbib}
\small 

\noindent
{\sc A.F.M. ter Elst,
Department of Mathematics,
University of Auckland,
Private bag 92019,
Auckland 1142,
New Zealand}  \\
{\em E-mail address}\/: {\bf terelst@math.auckland.ac.nz}

\mbox{}

\noindent
{\sc A. Linke,
Weierstrass Institute for Applied Analysis and Stochastics,
Mohrenstr.~39, 
10117 Berlin, 
Germany}  \\
{\em E-mail address}\/: {\bf alexander.linke@wias-berlin.de}

\mbox{}

\noindent
{\sc J. Rehberg,
Weierstrass Institute for Applied Analysis and Stochastics,
Mohrenstr.~39, 
10117 Berlin, 
Germany}  \\
{\em E-mail address}\/: {\bf rehberg@wias-berlin.de}

\end{document}